\theoremstyle{plain}
\newtheorem{prop}{Proposition}
\newtheorem{thm}{Theorem}
\newtheorem{cor}{Corollary}
\newtheorem{lemma}{Lemma}
\theoremstyle{remark}
\newtheorem{exm}{Example}
\newcommand{\Affi}{\mathbb A}
\newcommand{\W}{W}
\newcommand{\la}{\langle}
\newcommand{\ra}{\rangle}
\newcommand{\rarr}{\rightarrow}
\begin{document}

\title[]{Solutions of a Linear Equation in a Subgroup of Units in a Function Field}

\author{Chia-Liang Sun}
\maketitle

\begin{itemize}
\item
   Institute of Mathematics,
   Academia Sinica\newline
   E-mail: {\tt csun@math.sinica.edu.tw}
\end{itemize}

\begin{abstract}
Over a large class of function fields, we show that the solutions of some linear equations in the topological closure of a certain  subgroup of the group of units in the function field are exactly the solutions that are already in the subgroup. This result solves some cases of the function field analog of an old conjecture proposed by Skolem.
\end{abstract}

\section{Introduction}\label{OnSkolemConj_intro}
Let $K/k$ be a function field, i.e., $K$ is finitely generated over $k$ with transcendence degree $1$ such that $k$ is relatively algebraically closed in $k$. Let $\Omega_{K/k}$ be the set of all places of $K/k$. Let $M$ be a natural number, and $\Affi^M$ be the affine $M$-space, whose coordinate is denoted by ${\mathbf X}=(X_1,\ldots,X_M)$. For any two elements $\mathbf{a}=(a_1,\ldots,a_M)$ and $\mathbf{b}=(b_1,\ldots,b_M)$ in $\Affi^M(K^*)$, we define $\mathbf{ab}=(a_1b_1,\ldots,a_Mb_M)\in\Affi^M(K)$ and $\mathbf{a}\cdot{\mathbf b}=\sum_{i=1}^Ma_ib_i\in K$; we also write $\mathbf{b}\cdot{\mathbf X}$ for the function $\sum_{i=1}^Mb_iX_i$ on $\Affi^M$. For any subset $\Theta$ of some ring and any variety $V$ in $\Affi^M$, let $V(\Theta)$ denote the set of points on $V$ with each coordinate in $\Theta$.
For any $\mathbf{b}\in \Affi^M(K^*)$, we denote by $\W_{\mathbf{b}}$ (resp. $\W'_{\mathbf{b}}$) the variety in $\Affi^M$ defined by $\mathbf{b}\cdot{\mathbf X} = 0$ (resp. $\mathbf{b}\cdot{\mathbf X} = 1$).
We fix a cofinite subset $\Omega\subset\Omega_{K/k}$ and  endow $\prod_{v\in\Omega}K_v^*$ with the natural product topology. Via the diagonal embedding, we identify any subgroup $\Gamma\subset K^*$ with its image in $\prod_{v\in\Omega}K_v^*$, and denote by $\overline{\Gamma}$ its topological closure. Moreover, for each $v\in\Omega_{K/k}$, the inclusion $\Gamma\subset K_v^*$ is continuous and therefore induces a subtopology of $\Gamma$, which  will be referred to as \em $v$-adic subtopology\em.

The purpose of this paper is investigate the circumstances where the equalities
\begin{eqnarray}
&\W_{\mathbf{b}}(\overline{\Gamma})  = \W_{\mathbf{b}}(\Gamma) & \label{maineqW0}\\
&\W'_{\mathbf{b}}(\overline{\Gamma})  = \W'_{\mathbf{b}}(\Gamma) & \label{maineqW1}
\end{eqnarray}
hold. In the  case where $M=1$, both sides of (\ref{maineqW0}) are always empty; however, Example 0 in \cite{Sun} shows that
(\ref{maineqW1}) may fail unless we make some restrictions on the \em largeness \em of $\Gamma$. In the characteristic zero case, this is indeed the only assumption needed.
\begin{thm}\label{OnSkolemConj_main0}
Suppose that $k$ has characteristic $0$. Let $\Gamma\subset K^*$ be a subgroup contained in $O_S^*$ for some finite $S\subset\Omega_{K/k}$. Then for any $v\in\Omega_{K/k}$, the $v$-adic subtopology of $\Gamma$ is discrete. In particular,  both (\ref{maineqW0}) and (\ref{maineqW1}) hold.
\end{thm}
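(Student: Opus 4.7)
The plan is to reduce discreteness of $\Gamma$ in $K_v^*$ to an additive statement on $K_v$, establish that statement by induction on rank, and then deduce the rest from standard facts about discrete subgroups of Hausdorff groups. Since the sets $1 + \mathfrak{m}_v^N$ form a neighborhood basis of $1$ in $K_v^*$, discreteness of $\Gamma$ at $v$ is equivalent to discreteness of $H := \Gamma \cap (1 + \mathfrak{m}_v)$. Now, because $k$ is relatively algebraically closed in $K$, it injects into each residue field $\kappa(v)$, and so $k^* \cap (1 + \mathfrak{m}_v) = \{1\}$; combining this with the fact that the divisor map embeds $O_S^*/k^*$ into the free abelian group of degree-zero divisors supported on $S$, I conclude that $H$ injects into a finitely generated free abelian group, hence is itself finitely generated free abelian.

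Because the residue characteristic is zero, the formal logarithm $\log \colon 1 + \mathfrak{m}_v \to \mathfrak{m}_v$ converges and is a topological group isomorphism, so it suffices to show that the finitely generated subgroup $G := \log H$ of $(K_v, +)$ is discrete in the $v$-adic topology. This I would prove via the following lemma by induction on the rank $r$ of $G$: \emph{in any complete discretely valued field of residue characteristic zero, every finitely generated additive subgroup is discrete}. For $r \geq 1$, set $e = \min\{v(g) : g \in G \setminus \{0\}\}$ and consider the reduction homomorphism $\phi \colon G \to \mathfrak{m}_v^e/\mathfrak{m}_v^{e+1} \cong \kappa(v)$. Because multiplication by a nonzero integer is an isomorphism on the one-dimensional $\kappa(v)$-vector space $\mathfrak{m}_v^e/\mathfrak{m}_v^{e+1}$ (as the residue characteristic is zero), $\phi$ sends any generator attaining the minimum $e$ to a nonzero element, so its image is torsion-free and finitely generated, hence free of rank $\geq 1$. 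Therefore $\ker \phi = G \cap \mathfrak{m}_v^{e+1}$ has rank strictly less than $r$, and by induction some $\mathfrak{m}_v^N$ meets it only at $0$; since $G \cap \mathfrak{m}_v^N \subset \ker \phi$ once $N > e$, discreteness of $G$ follows.

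The main obstacle is precisely this lemma. The naive product-formula estimate $v(\gamma - 1) \leq \deg(\gamma)_\infty / \deg(v)$ yields only a bound depending on $\gamma$ rather than a uniform one, and the analogous statement fails in positive residue characteristic (for example, over a number field one can have $\epsilon^{p^n} \to 1$ nonarchimedeanly). What saves us is the fact that $v(n) = 0$ for every nonzero $n \in \mathbb{Z}$, which both makes the logarithm converge on all of $1 + \mathfrak{m}_v$ and makes the inductive step work. Once the $v$-adic subtopology on $\Gamma$ is known to be discrete, taking $1 + \mathfrak{m}_v^N$ at $v$ and $K_w^*$ at the other places shows that $\Gamma$ is discrete in $\prod_{w \in \Omega} K_w^*$; since a discrete subgroup of a Hausdorff topological group is automatically closed, $\overline{\Gamma} = \Gamma$, and both (\ref{maineqW0}) and (\ref{maineqW1}) follow immediately.
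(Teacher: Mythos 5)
Your proposal is correct and follows essentially the same route as the paper: the key points in both are that $\Gamma\cap(1+\mathfrak{m}_v)$ is finitely generated because $k^*\cap(1+\mathfrak{m}_v)$ is trivial and $O_S^*/k^*$ is finitely generated (and torsion-free), and that residue characteristic zero forces the successive quotients along the unit filtration to be torsion-free, so the rank must drop until the group is trivial. Your detour through the formal logarithm and the additive lemma is an unnecessary but harmless repackaging of the paper's direct argument that $U_n/U_{n+1}$ is torsion-free for $U_n=\Gamma\cap(1+\mathfrak{m}_v^n)$.
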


In the case where  $k$ is finite, the main result in \cite{Sun} shows that (\ref{maineqW1}) holds if  $M=1$ and $\Gamma$ is finitely generated. Nevertheless, Example \ref{x_plus_y} suggests that even in the case where $M=2$, both (\ref{maineqW0}) and (\ref{maineqW1}) can fail in general, unless we put some mixed assumptions on $\mathbf{b}$ and $\Gamma$. Recall that $K$ is \em separably generated \em over $k$ if there exists $t\in K$ such that $K$ is finite separable over  $k(t)$. The separable Hilbert subset $H_k(f_1,\ldots,f_m; g)$  of $k$, where each $f_i(T,X)$ is a separable irreducible polynomial in $k(T)[X]$ and $g(T)$ is a nonzero polynomial in $k[T]$, consists of those  $a\in k$ such that $g(a)\neq 0$ and  each $f_i(a,X)$ is defined and irreducible in $k[X]$ \cite{FieldArith}.
For each $i$, let $\psi_i:\Affi^M(K^*)\rarr\Affi^M(K^*)$ be the map which replaces the $i$-th component of $\mathbf{a}\in\Affi^M(K^*)$ by $1$ and keeps the others unchanged. We shall prove the following result.
\begin{thm}\label{OnSkolemConj_main}
Suppose that $k$ has characteristic $p$, and either
\begin{itemize}
\item that $k$ is finite, or
\item that $K$ is separably generated  over  $k$, that each separable Hilbert subset of $k$ is infinite, and that $k$ contains only finitely many roots of unity.
\end{itemize}
Let $\mathbf{b}\in \Affi^M(K^*)$ be contained in $\Affi^M(O_S^*)$ for some finite $S\subset\Omega_{K/k}$, and  $\Gamma\subset O_S^*$ be a subgroup. For each natural number  $m$, let $R_m\subset \Gamma$ be a complete set of representatives of $\Gamma/\Gamma\cap (kK^{p^m})^*$.
\begin{enumerate}
\item Each $R_m$ is a finite set.\label{fin_con}
\item Suppose that there is some $m$ such that for every $\mathbf{r}\in \Affi^M(R_m)$ the components of $\mathbf{br}$ are linearly independent over $kK^{p^m}$. Then both sides of (\ref{maineqW0}) are empty.\label{W0}
\item Suppose that there is some $m$ such that for every $\mathbf{r}\in \Affi^M(R_m)$ the components of some $\psi_j(\mathbf{br})$ are linearly independent over $kK^{p^m}$. Then  (\ref{maineqW1}) holds, and the common set is finite with its cardinality no larger than  the number of $\mathbf{r}\in \Affi^M(R_m)$ such that the components of $\mathbf{br}$ and those of each $\psi_j(\mathbf{br})$ are linearly independent over $kK^{p^m}$. \label{W1}
\end{enumerate}
\end{thm}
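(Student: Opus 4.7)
The plan is to dispose of (\ref{fin_con}) via a Dirichlet-unit-theorem argument, then unify (\ref{W0}) and (\ref{W1}) around a local-global transfer of linear independence over the subfield $kK^{p^m}$. For (\ref{fin_con}), I would note that $O_S^*/k^*$ is a finitely generated abelian group of rank $|S|-1$ (function-field Dirichlet unit theorem), and that $(O_S^*)^{p^m}\cdot k^*\subset O_S^*\cap (kK^{p^m})^*$. Hence $O_S^*/(O_S^*\cap (kK^{p^m})^*)$ is a quotient of the finite group $(O_S^*/k^*)/p^m(O_S^*/k^*)$, so $R_m$ is finite.

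For (\ref{W0}) and (\ref{W1}): given $\mathbf{a}\in \W_{\mathbf{b}}(\overline{\Gamma})$ (resp.\ $\mathbf{a}\in\W'_{\mathbf{b}}(\overline{\Gamma})$), approximate each $a_i$ by a sequence $\gamma_{i,n}\in\Gamma$ and, using (\ref{fin_con}) together with a diagonal subsequence, assume that each $\gamma_{i,n}=r_i\delta_{i,n}$ for a fixed $r_i\in R_m$ and $\delta_{i,n}\in\Gamma\cap (kK^{p^m})^*$. Then $a_i=r_i\alpha_i$ with $\alpha_i=\lim_n\delta_{i,n}\in\overline{\Gamma\cap (kK^{p^m})^*}$, and the defining equation becomes $\sum_i (b_ir_i)\alpha_{i,v}=0$ (resp.\ $=1$) at each $v\in\Omega$. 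The key technical input is twofold: (i) each $\alpha_{i,v}$ lies in the complete subfield $kK_v^{p^m}\subset K_v$, the $v$-adic closure of $kK^{p^m}$, which is a finite extension of $K_v^{p^m}$ because $K^{p^m}$ is dense in $K_v^{p^m}$ by continuity of Frobenius and $k$ is closed in $K_v$; and (ii) linear independence of the $b_ir_i$ over $kK^{p^m}$ in $K$ transfers to linear independence over $kK_v^{p^m}$ in $K_v$. For (ii), I would pick a separating transcendental $t\in K$ with $v(t)$ coprime to $p$ at every $v\in\Omega$ (feasible via weak approximation after shrinking $\Omega$ by a finite set in the finite-$k$ case, or via the separable-Hilbert-subset hypothesis in the second case); then $\{1,t,\ldots,t^{p^m-1}\}$ is a common basis of both $K/kK^{p^m}$ and $K_v/kK_v^{p^m}$, and invariance of matrix rank under base change yields the transfer.

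With (i) and (ii) in hand, (\ref{W0}) follows at once: $\sum_i(b_ir_i)\alpha_{i,v}=0$ with $\alpha_{i,v}\in kK_v^{p^m}$ and the $b_ir_i$ linearly independent over $kK_v^{p^m}$ forces each $\alpha_{i,v}=0$, contradicting $\alpha_{i,v}\in K_v^*$. For (\ref{W1}), expanding $b_ir_i=\sum_jc_{i,j}t^j$ and matching coefficients of $\{t^j\}$ in $\sum_i(b_ir_i)\alpha_{i,v}=1$ gives a linear system whose coefficient matrix has rank $M$ (this is where the $\psi_j$-hypothesis plus the transfer are used) over both $kK^{p^m}$ and $kK_v^{p^m}$. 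The unique solution then already lies in $kK^{p^m}$, so $\alpha_i$ is globally diagonal and $a_i=r_i\alpha_i\in K^*$; invoking the $M=1$ case of (\ref{maineqW1}) from \cite{Sun} (when $k$ is finite) or an analogous discreteness argument (in the second case) then yields $a_i\in\Gamma$. For the cardinality bound I argue (a) the map $\mathbf{a}\mapsto\mathbf{r}$ on $\W'_{\mathbf{b}}(\Gamma)$ lands in $\{\mathbf{r}:\mathbf{br}\text{ and each }\psi_j(\mathbf{br})\text{ lin indep over }kK^{p^m}\}$ (if $\mathbf{br}$ or some $\psi_j(\mathbf{br})$ is lin dep, the resulting dependence combined with $\sum_i b_ir_i\alpha_i=1$ and $\alpha_i\neq 0$ contradicts the $\psi_j$-hypothesis); and (b) the map is injective on this set, since equal images yield $\sum_i (b_ir_i)(\alpha_i-\alpha_i')=0$, whence $\alpha_i=\alpha_i'$ by linear independence of $\mathbf{br}$. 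The main obstacle I anticipate is the choice of $t$ so that $v(t)$ is coprime to $p$ at every $v\in\Omega$ simultaneously; after this is arranged, the rest is a clean linear-algebra argument.
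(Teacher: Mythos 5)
Your reduction via $R_m$ and the closure of $\Gamma\cap(kK^{p^m})^*$ matches the paper's, and your linear-algebra substitute for its key local step (the paper instead characterizes $kK^{p^m}$ as the joint kernel of Hasse iterative derivations, extends these continuously to $K_v$, and applies Voloch's Wronskian criterion) is a genuinely different and, in principle, workable route. But one step fails as written: the element $t$ you require cannot exist. You ask for $v(t)$ coprime to $p$ at \emph{every} $v$ in the cofinite set $\Omega$, yet any nonconstant $t$ has $v(t)=0$ at all but finitely many places, and $0$ is not coprime to $p$; neither weak approximation nor the Hilbert-subset hypothesis can repair an unsatisfiable condition. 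The fix is to drop the global $t$ altogether: the independence transfer is purely local, so at each $v$ take $s_v\in K$ with $v(s_v)=1$; then $\{s_v^j\}_{0\le j<p^m}$ is a basis of $K$ over $kK^{p^m}$ (the valuations are distinct mod $p^m$ and $[K:kK^{p^m}]=p^m$ by separable generation) and remains linearly independent over the closure of $kK^{p^m}$ in $K_v$, which is all the rank argument needs. Also, in part (3) the hypothesis gives independence of some $\psi_j(\mathbf{br})$, not of $\mathbf{br}$ itself, so your coefficient matrix need not have rank $M$; you must instead analyze the relation space of $\{b_1r_1,\ldots,b_Mr_M,1\}$ (the paper does this via the adjugate identity $(\det \mathbf{T}_{\mathbf{b},I})\,\mathbf{c}=\mathbf{T}_{\mathbf{b},I}^*\mathbf{e}$, whose $j$-th component is $\det\mathbf{T}_{\psi_j(\mathbf{b}),I}$).

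The more serious gap is the final step of (3). Having produced $a_i=r_i\alpha_i\in\overline{\Gamma}\cap K^*$, you need $\overline{\Gamma}\cap K^*=\Gamma$, which you dismiss as ``an analogous discreteness argument.'' There is no discreteness available: in positive characteristic the $v$-adic subtopology of $\Gamma$ is not discrete (this is precisely what Example 1 exploits), and the closedness of $\Gamma$ in $K^*$ is where the theorem's hypotheses on $k$ actually enter. The paper devotes Lemmas 1--4 and Corollaries 1--2 to this point: one shows that every subgroup of $\Gamma$ containing some $\Gamma^m$ is open --- the prime-to-$p$ part needs places that stay inert in the relevant Kummer-type extensions, supplied by the infinitude of separable Hilbert subsets, and the reduction to a finitely generated $\sqrt{\Gamma}$ needs the finiteness of the roots of unity in $k$ --- and then deduces that $\Gamma$ is closed in $K^*$. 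Without this input your argument only yields $\W'_{\mathbf{b}}(\overline{\Gamma})\subset\W'_{\mathbf{b}}(\overline{\Gamma}\cap K^*)$, not the claimed equality. A smaller point: for uncountable $k$ the product $\prod_{v\in\Omega}K_v^*$ is not first countable, so your sequential description of $\overline{\Gamma}$ should be replaced by nets, or better by the observation that $\bigcup_{r\in R_m}r\,\overline{\Gamma\cap(kK^{p^m})^*}$ is a finite union of closed sets containing $\Gamma$.
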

A large class of fields satisfy the assumption of Theorem \ref{OnSkolemConj_main} on $k$. In fact, if $k$ is a global field, then each separable Hilbert subset of $k$ is infinite (\cite{FieldArith}, Theorem 13.3.5) and $k$ contains only finitely many roots of unity; these two properties are preserved under a purely transcendental extension with arbitrary cardinality as its transcendence degree ({\em{op. cit.}}, Proposition 13.2.1) and any algebraic extension with a finite separable degree ({\em{op. cit.}}, Proposition 12.3.3).

Some brief remarks on the hypotheses of Theorem \ref{OnSkolemConj_main} follow. In case where $M=1$, the assumptions in (\ref{W0}) and (\ref{W1})  are vacuous. When $k$ is finite and $M=2$, using Lemma 3 in \cite{axby}, we may replace the assumption in (\ref{W0}) with $b_1b_2^{-1}\notin\sqrt{\Gamma}$, and that in (\ref{W1}) with $\{b_1,b_2\}\not\subset\sqrt{\Gamma}$, where $\sqrt{G}=\{x\in K^*: x^n\in G \text{ for some }n\in\mathbb{N}\}$ for any subgroup $G\subset K^*$.

\begin{exm}\label{x_plus_y}
Let $K=\mathbb F_p(t)$ be a purely transcendental extension of $\mathbb F_p$, and $\Gamma=\la t, -t, 1-t\ra$ be the subgroup of $K^*$ generated by $t$, $-t$, and $1-t$.  Take $\Omega\subset\Omega_K$ to be a cofinite subset such that $\Gamma$ is contained in the valuation ring $O_v$ for every $v\in\Omega$, and $\mathbf{b}=(1,1)\in\Affi^2(K^*)$. The sequence
$(t^{p^{n!}})_{n\geq 1}$ in $\Gamma$ converges to $\alpha\in \overline{\Gamma}
\setminus K^*$ (\cite{Sun}, Example 1). Therefore we see that $(\alpha, -\alpha)\in \W_{\mathbf{b}}(\overline{\Gamma})\setminus\W_{\mathbf{b}}(\Gamma)$ and  $(\alpha, 1-\alpha)\in \W'_{\mathbf{b}}(\overline{\Gamma}) \setminus\W'_{\mathbf{b}}(\Gamma)$. \end{exm}

In case where $\Gamma\subset O_S^*$ and $\mathbf{b}\in\Affi^M(O_S^*)$ for some finite $S\subset\Omega_{K/k}$, the equalities (\ref{maineqW0}) and (\ref{maineqW1}) are stronger assertions than the function field analog of an old conjecture raised by Skolem \cite{Sko37}. To state his conjecture, we make the following abbreviation:
$$
\begin{array}{lll}
\mathfrak{S}_L(K/k,M,\mathbf{b}, S, \Gamma) & \text{stands for} &\text{For each nonzero ideal }I\subset O_S\text{ there exists }\\
&&\mathbf{x}\in\Affi^M(\Gamma)\text{ such that }\mathbf{b}\cdot{\mathbf x}\in I.\\
\mathfrak{S}_G(K/k,M,\mathbf{b}, S, \Gamma) & \text{stands for} &
\text{There exists } \mathbf{x}\in\Affi^M(\Gamma) \text{ such that } \mathbf{b}\cdot{\mathbf x}=0.\\
\end{array}
$$
In the case where $K$ is a number field and $S$ contains all Archimedean places, Skolem asserts that $\mathfrak{S}_L(K,M,\mathbf{b}, S, \Gamma)\Leftrightarrow\mathfrak{S}_G(K,M,\mathbf{b}, S, \Gamma)$ should always hold, and gives a proof when $M=2$ \cite{Sko37}. The validity of Skolem's assertion seems to have been largely ignored in the recent literature until Harari and Voloch \cite{HV} noticed its connection with (\ref{maineqW0}) and (\ref{maineqW1}). In fact, we may translate Theorem \ref{OnSkolemConj_main0} and Theorem \ref{OnSkolemConj_main} into results on Skolem's conjecture via the following equivalences:
$$
\begin{array}{cccccc}
\mathfrak{S}_L(K/k,M,\mathbf{b}, S, \Gamma) & \Leftrightarrow & \W_{\mathbf{b}}(\overline{\Gamma})\neq\emptyset & \Leftrightarrow &
\W'_{\phi_i(\mathbf{b})}(\overline{\Gamma})\neq\emptyset &\text{ where } \Omega=\Omega_{K/k}\setminus S\\
\mathfrak{S}_G(K/k,M,\mathbf{b}, S, \Gamma) & \Leftrightarrow &  \W_{\mathbf{b}}(\Gamma)\neq\emptyset & \Leftrightarrow &
\W'_{\phi_i(\mathbf{b})}(\Gamma)\neq\emptyset &
\end{array}
$$
for each $i$, where $\phi_i:\Affi^M(K^*)\rarr\Affi^{M-1}(K^*)$ is defined by
$$
(a_1,\ldots,a_M)\mapsto \left(-\frac{a_1}{a_i},\ldots,-\frac{a_{i-1}}{a_i},-\frac{a_{i+1}}{a_i},\ldots,-\frac{a_M}{a_i}\right).
$$
For instance, if $K/k$ satisfies the assumptions in Theorem \ref{OnSkolemConj_main}, then $\mathfrak{S}_L(K/k,2,\mathbf{b}, S, \Gamma)\Leftrightarrow\mathfrak{S}_G(K/k,2,\mathbf{b}, S, \Gamma)$ always holds.

\section{Proof of the Main Results}\label{pf}
\begin{lemma}\label{LN}
Let $\Gamma\subset K^*$ be a subgroup contained in $O_S^*$ for some finite $S\subset\Omega_{K/k}$. Then for any $v\in\Omega_{K/k}$, there is a subgroup $G_v\subset\Gamma$ which is open in the $v$-adic subtopology. If we further suppose that $k$ contains only finitely many roots of unity, then $G_v$ may be chosen  such that $\sqrt{G_v}$ is finitely generated.
\end{lemma}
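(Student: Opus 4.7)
The plan is to take $G_v := \Gamma \cap (1 + \mathfrak{m}_v)$, where $\mathfrak{m}_v$ denotes the maximal ideal of the valuation ring $O_v \subset K_v$. Since $1 + \mathfrak{m}_v$ is an open subgroup of $K_v^*$ and the $v$-adic subtopology on $\Gamma$ is by definition induced from the continuous inclusion $\Gamma \hookrightarrow K_v^*$, the intersection $G_v$ is automatically an open subgroup of $\Gamma$. This settles the first assertion with no hypothesis on $k$.

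For the second assertion, I would sandwich $\sqrt{G_v}$ between a finite subgroup and a finitely generated one. The key local observation is that $G_v \cap k^* = \{1\}$: because $v$ restricts to the trivial valuation on $k$ (it is a place of $K/k$), we have $\mathfrak{m}_v \cap k = \{0\}$, hence $(1 + \mathfrak{m}_v) \cap k^* = \{1\}$. Consequently any $x \in \sqrt{G_v} \cap k^*$ satisfies $x^n \in G_v \cap k^* = \{1\}$ for some $n \geq 1$, so $\sqrt{G_v} \cap k^*$ lies inside the group $\mu(k)$ of roots of unity of $k$, which is finite by hypothesis.

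Next I would argue that $\sqrt{G_v} \subset O_S^*$: if $x^n \in G_v \subset O_S^*$ then for any $v' \notin S$ one has $n\,v'(x) = v'(x^n) = 0$, forcing $v'(x) = 0$. The divisor map $O_S^* \to \bigoplus_{v' \in S} \mathbb{Z}$, $x \mapsto (v'(x))_{v' \in S}$, has kernel exactly $k^*$ (since $k$ is algebraically closed in $K$, an element with empty divisor on the associated curve is a constant), and its image sits inside a finitely generated free abelian group. Hence $O_S^*/k^*$ is finitely generated, and so is its subgroup $\sqrt{G_v}/(\sqrt{G_v} \cap k^*)$. Combining this with the finiteness of $\sqrt{G_v} \cap k^*$ yields, by lifting, a finite generating set for $\sqrt{G_v}$. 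I do not anticipate a serious obstacle: the only nontrivial inputs are the triviality of $v$ on $k$, the identification of $k$ as the constant subfield of $K$, and the assumed finiteness of $\mu(k)$.
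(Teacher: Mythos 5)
Your proposal is correct and follows essentially the same route as the paper: take $G_v=\Gamma\cap(1+m_v)$ for openness, then map $\sqrt{G_v}\subset O_S^*$ into the finitely generated group $O_S^*/k^*$ and observe that the kernel lies in the (finite) group of roots of unity of $k$ because $k^*\cap(1+m_v)$ is trivial. The only cosmetic difference is that you prove the finite generation of $O_S^*/k^*$ via the divisor map, where the paper simply cites it.
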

\begin{proof}
Taking $G_v=\Gamma\cap(1+m_v)$, we see that $G_v$ is open in the $v$-adic subtopology of  $\Gamma$ since $1+m_v$ is an open subgroup of $K_v^*$. The inclusion $\Gamma\subset O_S^*$ induces the map $G_v\rarr O_S^*/k^*$, which is injective because $k^*\cap(1+m_v)$ is trivial. The first conclusion follows from the fact that $O_S^*/k^*$ is  finitely generated (Corollary 1 of Proposition 14.1, \cite{NTFF}). Now we show that $\sqrt{G_v}$ is finitely generated under the additional hypothesis that $k$ contains only finitely many roots of unity.
Note that $\sqrt{G_v}\subset\sqrt{\Gamma}\subset O_S^*$, which again induces the map $\sqrt{G_v}\rarr O_S^*/k^*$ with its kernel  contained in $k^*\cap\sqrt{K^*\cap(1+m_v)}$, which is exactly the group of roots of unity in $k$. This completes the proof.
\end{proof}

{\it Proof of Theorem \ref{OnSkolemConj_main0}}:
Fix $v\in\Omega_{K/k}$ and let $U_n=\Gamma\cap(1+m_v^n)$ for $n\geq 1$. Then $U_n$ is open in the $v$-adic subtopology of $\Gamma$. Since $k$ has characteristic zero, the quotient groups $U_n/U_{n+1}$ are torsion-free for all $n$. The proof of Lemma \ref{LN} shows that $U_1$ is finitely generated, hence $U_n$ is trivial for some $n$.\qed

\begin{lemma}\label{remain_prime}
Suppose that $K$ is separably generated  over $k$, and that each separable Hilbert subset of $k$ is infinite. Let $L$ be a finite separable extension of $K$. Then there are infinitely many $v\in\Omega_{K/k}$ which extend uniquely and unramifiedly to a place of $L$.
\end{lemma}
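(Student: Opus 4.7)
The plan is to reduce the question to an application of Hilbert irreducibility on $k$ by exploiting the specialization places $t\mapsto a$ of $k(t)$. Since $K/k$ is separably generated, fix $t\in K$ with $K/k(t)$ finite separable; as $L/K$ is separable, so is $L/k(t)$. By the primitive element theorem pick $\gamma\in L$ with $L=k(t)(\gamma)$, and after scaling $\gamma$ by a suitable element of $k[t]$ we may further assume $\gamma$ is integral over $k[t]$, so that its minimal polynomial $h(T,X)\in k[T][X]$ is monic in $X$, $X$-separable, irreducible in $k(T)[X]$, and of degree $N:=[L:k(t)]$.

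Let $g(T):=\operatorname{disc}_X h(T,X)\in k[T]$, which is nonzero because $h$ is $X$-separable. By hypothesis the separable Hilbert subset $H_k(h;g)$ is infinite, and for every $a$ in it, $h(a,X)\in k[X]$ is both separable and irreducible of degree $N$. I will show that every such $a$ yields a place $v\in \Omega_{K/k}$ extending uniquely and unramifiedly to $L$, with distinct values of $a$ giving distinct $v$.

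Fix such an $a$, let $w_a$ be the place of $k(t)/k$ corresponding to $t\mapsto a$ with valuation ring $R:=k[t]_{(t-a)}$, and set $S:=R[\gamma]$. Then $S$ is a free $R$-algebra of rank $N$; since $g(a)\neq 0$, the discriminant of $h(T,X)\in R[X]$ is a unit in $R$, so $S$ is \'etale over $R$ and hence coincides with the integral closure of $R$ in $L$. The special fiber $S\otimes_R R/(t-a)\cong k[X]/(h(a,X))$ is a field because $h(a,X)$ is irreducible, so $S$ is local, hence a DVR. Thus $w_a$ admits a unique extension $u$ to $L$ with $e(u|w_a)=1$ and $f(u|w_a)=N$. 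Setting $v:=u|_K$, any place of $L$ lying over $v$ must also lie over $w_a$ and therefore equal $u$; so $u$ is the only extension of $v$ to $L$, and the multiplicativity $1=e(u|w_a)=e(u|v)\,e(v|w_a)$ forces $e(u|v)=1$. Distinct $a\in H_k(h;g)$ give distinct $w_a$ and hence distinct $v$, so one obtains infinitely many such places of $K$.

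The main technical ingredient is the Dedekind-style identification of $S=R[\gamma]$ as the full integral closure of $R$ in $L$, and its description as a DVR under the separable/irreducible hypothesis on $h(a,X)$; everything else amounts to routine bookkeeping of ramification and residue degrees in the tower $k(t)\subset K\subset L$, together with the primitive-element and scaling reductions that set up the specialization argument.
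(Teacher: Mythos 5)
Your proof is correct and follows essentially the same route as the paper's (very terse) argument: apply Hilbert irreducibility to the minimal polynomial over $k(t)$ of a primitive element of $L$, and transfer the resulting splitting behaviour of the specialization places $t\mapsto a$ up through the tower $k(t)\subset K\subset L$. You merely supply the details the paper omits — the integrality and discriminant normalizations (using $H_k(h;g)$ with $g=\operatorname{disc}_X h$ rather than the paper's $H_k(f;1)$, which cleanly guarantees unramifiedness) and the multiplicativity bookkeeping that makes the reduction to $k(t)$ legitimate.
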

\begin{proof}
Since $K$ is separably generated  over $k$, it is enough to assume that $K=k(t)$, in which case the desired property follows from the infiniteness of the separable Hilbert subset $H_k(f; 1)$, where $L=K(y)$ with $y$ a root of the separable polynomial $f\in k(T)[X]$.
\end{proof}

\begin{lemma}\label{ptop}
Suppose that $K/k$ satisfies the assumptions in Theorem \ref{OnSkolemConj_main}.
Let $\Gamma\subset K^*$ be a subgroup contained in $O_S^*$ for some finite $S\subset\Omega_{K/k}$. Then for any integer $m$ prime to $p$, the subgroup $\Gamma^m$ of $\Gamma$ is open.
\end{lemma}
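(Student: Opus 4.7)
The plan is to produce an open neighborhood of $1$ in $\Gamma$ inside $\Gamma^m$; since $\Gamma^m$ is a subgroup of the topological group $\Gamma$, that suffices. Fix $v_0\in\Omega$. In both sub-cases of Theorem~\ref{OnSkolemConj_main} the field $k$ contains only finitely many roots of unity, so Lemma~\ref{LN} furnishes an open subgroup $G:=\Gamma\cap(1+m_{v_0})$ of $\Gamma$ (open in the $v_0$-adic subtopology, hence in the product topology) with $\sqrt{G}$ finitely generated. Since $1+m_{v_0}$ is pro-$p$ in characteristic $p$, $G$ is torsion-free and hence a finitely generated free abelian group, and $\sqrt{G}/G$ is a finite abelian group. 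It is enough to exhibit an open neighborhood of $1$ in $\Gamma$ lying in $G^m\subseteq\Gamma^m$.

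Form the field $L:=K\bigl(\zeta_m,\sqrt{G}^{1/m}\bigr)$, which is finite separable Galois over $K$ because $\sqrt{G}$ is finitely generated and $m$ is prime to $p$. By Lemma~\ref{remain_prime} infinitely many places of $K/k$ extend uniquely and unramifiedly to $L$, so I can select one such $w$ lying in the cofinite set $\Omega$; the unique extension (and the fact that $L/K$ is Galois) gives $L\cap K_w=K$. Set $U:=G\cap(1+m_w)$, an open subgroup of $\Gamma$. For $\gamma\in U$, the pro-$p$ nature of $1+m_w$ together with $m$ being prime to $p$ produces a unique $m$-th root $\epsilon\in 1+m_w$. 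Because $\gamma\in\sqrt{G}$, every $m$-th root of $\gamma$ lies in $L$, so $\epsilon\in L\cap K_w=K$; in particular $\epsilon\in K^*$ with $\epsilon^m=\gamma$, and consequently $\epsilon\in\sqrt{G}\cap(1+m_w)$.

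The main obstacle is upgrading $\epsilon\in\sqrt{G}$ to $\epsilon\in G$, which would give $\gamma=\epsilon^m\in G^m$ and finish the proof. Explicitly, $\epsilon$ lies in $G$ precisely when no non-identity coset representative $g_i$ of $\sqrt{G}/G$ satisfies $g_i\in G\cdot(1+m_w)$, i.e., when the induced residue map $\sqrt{G}/G\to k_w^*/\overline{G}_w$ (with $\overline{G}_w\subset k_w^*$ the image of $G$ under reduction) is injective. I expect to secure this by further restricting the choice of $w$: each individual condition $g_i\notin G\cdot(1+m_w)$ fails only on a thin set of places, detectable by enlarging $L$ with suitable auxiliary Kummer data encoding the finitely many $g_i$ and reapplying Lemma~\ref{remain_prime}, so that a single $w\in\Omega$ can be selected satisfying all the finitely many conditions simultaneously. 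Arranging this delicate compatibility is where the bulk of the technical difficulty lies.
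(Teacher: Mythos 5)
Your overall strategy (Kummer extension of $K$ generated by $m$-th roots, a place supplied by Lemma~\ref{remain_prime} at which that extension stays inert, Hensel's lemma to extract an $m$-th root locally, then descend it to $K$) is essentially the strategy of the paper. But there is a genuine gap at exactly the point you flag: you obtain $\epsilon\in\sqrt{G}\cap(1+m_w)$ with $\epsilon^m=\gamma$ and then admit you cannot get $\epsilon\in G$, deferring this to a speculative plan (choosing $w$ so that $\sqrt{G}/G\to k_w^*/\overline{G}_w$ is injective). That plan is not carried out and is not obviously achievable: you would need finitely many independent local conditions to hold at a \emph{single} place, which is a Grunwald--Wang-type simultaneity problem, and in any case the passage from ``$\gamma\in(\sqrt{G})^m\cap G$'' to ``$\gamma\in G^m$'' is false for abstract abelian groups (e.g.\ $G=4\mathbb{Z}\subset\mathbb{Z}=\sqrt{G}$ additively, $m=2$), so some real input is needed. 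The paper dissolves this difficulty before it arises: it first invokes Lemma~5 of \cite{Sun} to reduce to the case $\Gamma=\sqrt{\Gamma}$, after which it suffices to show $\Gamma\cap U_S\subset(K^*)^m$, because an $m$-th root in $K^*$ of an element of $\Gamma$ automatically lies in $\sqrt{\Gamma}=\Gamma$. That reduction is the missing idea in your write-up; without it (or a proof of your injectivity claim) the argument is incomplete.

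Two further points. First, your proof only addresses the second bullet of the hypotheses of Theorem~\ref{OnSkolemConj_main}: when $k$ is finite, separable Hilbert subsets of $k$ need not be infinite, so Lemma~\ref{remain_prime} is unavailable (and a single place inert in all of $L$ need not exist when $\Gal(L/K)$ is non-cyclic); the paper handles that case separately by citing Lemma~12 of \cite{Sun}. Second, a structural difference worth noting: rather than one place $w$ for the whole of $L$, the paper takes one place $v_E$ for each minimal subextension $E/K$ inside $L$ and argues that any $x\in\Gamma\setminus(K^*)^m$ is detected at some $v_E$ because $K(x^{1/m})$ contains a minimal subextension. Under the infinite-Hilbert-set hypothesis your single-place variant is legitimate for the descent step, but it does not repair the main gap above.
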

\begin{proof}
The case where $k$ is finite is proved in Lemma 12 of \cite{Sun}. Thus we assume that $k$ is infinite.
By Lemma \ref{LN}, we may assume that $\sqrt{\Gamma}$ is  finitely generated; then by Lemma 5 of \cite{Sun}, we may further assume that $\Gamma=\sqrt{\Gamma}$. Let $L$ be the finite Galois extension of $K$ obtained by adjoining all the $m$-th roots of every element in $\Gamma$. For each field $E$ such that $K\subset E \subset L$, Lemma \ref{remain_prime} yields a place $v_E\in\Omega$ which extends to a unique place $v_E$ of $E$ such that $[E_{v_E}:K_{v_E}]=[E:K]$. Let $S$ be the finite set consisting of those $v_E$ such that there is no proper intermediate field between $K$ and $E$. We shall
complete the proof by showing that $\Gamma\cap U_S\subset \Gamma^m$, where $U_S=\prod_{v\in S}1+m_v$ is an open subgroup of $\prod_{v\in S}K_v^*$. In fact, we only have to show that $\Gamma\cap U_S\subset (K^*)^m$ because $\Gamma=\sqrt{\Gamma}$. Assume $x\in\Gamma\cap U_S\setminus (K^*)^m$ and let $F$ be the extension of $K$ obtained by adjoining an $m$-th root of $x$. Then we have $K\subsetneq E\subset F \subset L$ for some $E$ such that $v_E\in S$. Since $[E_{v_E}:K_{v_E}]=[E:K]\neq 1$, it follows that $x$ has no $m$-th root in $K_{v_E}$, which contradicts the assumption $x\in U_S$ by Hensel's lemma.
\end{proof}

\begin{lemma}
Suppose that $k$ has characteristic $p$, and that $k$ contains only finitely many roots of unity.
Let $\Gamma\subset K^*$ be a subgroup contained in $O_S^*$ for some finite $S\subset\Omega_{K/k}$.
Then for every $v\in\Omega_{K/k}$, any subgroup of $\Gamma$ containing  $\Gamma^{p^n}$ for some $n\in\mathbb{N}$ is open in the $v$-adic subtopology of $\Gamma$.
\end{lemma}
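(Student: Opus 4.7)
The plan is to establish that $\Gamma^{p^n}$ itself is open in the $v$-adic subtopology of $\Gamma$; this will suffice because any subgroup of a topological group containing an open subgroup is open. First I would invoke Lemma \ref{LN}: under the hypothesis that $k$ has only finitely many roots of unity, it produces an open subgroup $G_v \subset \Gamma$ with $\sqrt{G_v}$ finitely generated. Replacing $G_v$ by $\sqrt{G_v}$ keeps it open (a subgroup containing an open subgroup is open) and makes it finitely generated and saturated in $K^*$. Since $G_v$ is open in $\Gamma$ and $G_v^{p^n} \subset \Gamma^{p^n}$, it then suffices to show $G_v^{p^n}$ is open in $G_v$.

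Finite generation of $G_v$ implies that $G_v/G_v^{p^n}$ is a finitely generated abelian group of bounded exponent, hence finite, so $G_v^{p^n}$ has finite index in $G_v$. In any topological group, a finite-index subgroup is open iff closed, so I would reduce to showing $G_v^{p^n}$ is closed. Concretely, given $y_k \in G_v$ with $y_k^{p^n} \to x \in G_v$ in $K_v^*$, I must find $y \in G_v$ with $y^{p^n} = x$. The identity $(a - b)^{p^n} = a^{p^n} - b^{p^n}$, valid in characteristic $p$, yields $v(y_k^{p^n} - y_\ell^{p^n}) = p^n \cdot v(y_k - y_\ell)$, so $(y_k)$ is Cauchy in the complete valued field $K_v$ and converges to some $y \in K_v$ with $y^{p^n} = x$.

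The one remaining step, and what I expect to be the main obstacle, is to verify that this $y$ lies in $G_v$ rather than just in $K_v$. Since $y^{p^n} = x \in K^*$, the element $y$ lies in $K^{1/p^n} \cap K_v$; my plan is to identify this intersection with $K$ using that the DVR $O_v$ is excellent (being essentially of finite type over the field $k$), whence the completion $K_v$ is a separable extension of $K$ and contains no proper purely inseparable extension of $K$. Granted $y \in K^*$, the saturation property $G_v = \sqrt{G_v}$ together with $y^{p^n} \in G_v$ forces $y \in G_v$, and hence $x = y^{p^n} \in G_v^{p^n}$, completing the argument. The harder conceptual point is this identification $y \in K^*$ via the separability of $K_v/K$; once that is in hand, the rest is a combination of Lemma \ref{LN}, the topological folklore on finite-index subgroups, and a single characteristic-$p$ algebraic identity.
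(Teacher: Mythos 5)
Your overall strategy is sound and closely parallels the paper's own proof: both arguments reduce via Lemma~\ref{LN} to a finitely generated saturated group, and both rest on the same two facts about $p^n$-th powers in $K_v^*$ --- that $(K_v^*)^{p^n}$ is closed in $K_v^*$ (your Frobenius--Cauchy argument, using $(a-b)^{p^n}=a^{p^n}-b^{p^n}$) and that $(K_v^*)^{p^n}\cap K^*=(K^*)^{p^n}$ (your excellence/separability argument; the paper simply asserts this identity). Where the paper finishes by observing that the continuous map $\Gamma\rightarrow K_v^*/(K_v^*)^{p^n}$ has finite image in a Hausdorff quotient, hence discrete image, you instead use ``closed of finite index implies open''; these two endgames are interchangeable.

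The genuine gap is in your reduction step. The group $\sqrt{G_v}=\{x\in K^*:x^n\in G_v\text{ for some }n\in\mathbb{N}\}$ is a subgroup of $K^*$ that need \emph{not} be contained in $\Gamma$, so after ``replacing $G_v$ by $\sqrt{G_v}$'' the two assertions you then lean on fail: the new $G_v$ is not a subgroup of $\Gamma$ (so ``open in $\Gamma$'' no longer applies as stated), and, more seriously, the containment $G_v^{p^n}\subset\Gamma^{p^n}$ is false in general --- an element $y^{p^n}$ with $y\in\sqrt{G_v}\setminus\Gamma$ need only lie in $\sqrt{\Gamma^{p^n}}$, not in $\Gamma^{p^n}$. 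Thus even after you prove that $(\sqrt{G_v})^{p^n}$ is open in $\sqrt{G_v}$, you have not exhibited an open subgroup of $\Gamma$ contained in $\Gamma^{p^n}$, which is what the conclusion requires. Note that your main argument genuinely needs saturation (it is how you pull the root $y$ back into the group), so the issue cannot simply be dropped. The paper faces exactly the same difficulty and disposes of it by invoking Lemma 5 of \cite{Sun} to justify passing from $\Gamma$ to $\sqrt{\Gamma}$; some such argument must be supplied here. Everything else in your proposal --- the finiteness of $G_v/G_v^{p^n}$, the Cauchy-sequence construction of the $p^n$-th root in $K_v$, the identification $K^{1/p^n}\cap K_v=K$ via excellence of $O_v$, and the final use of saturation --- is correct.
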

\begin{proof}
It suffices to show that those subgroups $\Gamma^{p^n}$ are open in the $v$-adic subtopology of $\Gamma$. As in the proof of Lemma \ref{ptop}, we may assume that $\Gamma=\sqrt{\Gamma}$ is  finitely generated. Because $(K_v^*)^{p^n}\cap K^*= (K^*)^{p^n}$, we have $(K_v^*)^{p^n}\cap\Gamma\leq (K^*)^{p^n}\cap\Gamma=\Gamma^{p^n}$.
Then it suffices to show that $(K_v^*)^{p^n}\cap\Gamma$ is open in the $v$-adic subtopology of $\Gamma$. Note that $(K_v^*)^{p^n}$ is closed in $K_v^*$ and consequently $K_v^*/(K_v^*)^{p^n}$ is Hausdorff. Consider the map $\Gamma\rarr K_v^*/(K_v^*)^{p^n}$ induced from the inclusion $\Gamma\subset K_v^*$, which is continuous with respect to the $v$-adic subtopology of $\Gamma$. Since this map factors through $\Gamma/\Gamma^{p^n}$, its image is finite, whence discrete. This completes our proof.
\end{proof}

\begin{cor}\label{csp}
Suppose that $K/k$ satisfies the assumptions in Theorem \ref{OnSkolemConj_main}.
Let $\Gamma\subset K^*$ be a subgroup contained in $O_S^*$ for some finite $S\subset\Omega_{K/k}$. Then any subgroup of $\Gamma$ containing  $\Gamma^m$ for some $m\in\mathbb{N}$ is open.\qed
\end{cor}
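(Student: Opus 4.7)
The plan is to write $m = p^a m'$ with $\gcd(m', p) = 1$ and deduce the corollary by combining the two preceding results, applying Lemma \ref{ptop} to $\Gamma$ but the immediately preceding lemma to $\Gamma^{m'}$ rather than to $\Gamma$ itself.

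First, Lemma \ref{ptop}, applied with the exponent $m'$ prime to $p$, guarantees that $\Gamma^{m'}$ is open in $\Gamma$ for the topology induced from $\prod_{v \in \Omega} K_v^*$. The key elementary observation is the identity $\Gamma^m = (\Gamma^{m'})^{p^a}$: for any $y \in \Gamma$ one has $y^m = (y^{m'})^{p^a}$, and conversely every element of the form $(w^{m'})^{p^a}$ equals $w^m \in \Gamma^m$. Consequently, if $H$ is any subgroup of $\Gamma$ containing $\Gamma^m$, then $H \cap \Gamma^{m'}$ is a subgroup of $\Gamma^{m'}$ containing $(\Gamma^{m'})^{p^a}$.

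Now I apply the preceding lemma, taking $\Gamma^{m'}$ (still a subgroup of $O_S^*$) in place of $\Gamma$ and $p^a$ in place of $p^n$: the subgroup $H \cap \Gamma^{m'}$ is open in the $v$-adic subtopology of $\Gamma^{m'}$ for every $v \in \Omega_{K/k}$. Since any set open in the $v$-adic subtopology of $\Gamma^{m'}$ is automatically open in the finer subtopology induced from $\prod_{v \in \Omega} K_v^*$ (it is the intersection with $\Gamma^{m'}$ of a cylinder set), we conclude that $H \cap \Gamma^{m'}$ is open in $\Gamma^{m'}$, hence open in $\Gamma$, and therefore so is $H$. No step presents any real obstacle; the corollary is a routine combination of the two preceding results, the factorization $m = p^a m'$, and a comparison of the two topologies on $\Gamma^{m'}$.
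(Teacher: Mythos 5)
Your argument is correct and is precisely the combination the paper intends: the corollary is stated with no written proof because it follows directly from Lemma \ref{ptop} and the preceding (unnumbered) lemma via the factorization $m = p^a m'$, together with the observation that openness in a single $v$-adic subtopology implies openness in the product-induced topology. Your instantiation (applying Lemma \ref{ptop} to $\Gamma$ and the other lemma to $\Gamma^{m'}$, then passing from $H\cap\Gamma^{m'}$ open to $H$ open as a union of cosets) is a valid way to carry this out.
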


\begin{cor}\label{closed}
Suppose that $K/k$ satisfies the assumptions in Theorem \ref{OnSkolemConj_main}. Let $\Gamma\subset K^*$ be a subgroup contained in $O_S^*$ for some finite $S\subset\Omega_{K/k}$. Then $\Gamma$ is closed in $K^*$.
\end{cor}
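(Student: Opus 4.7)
The plan is to prove the equivalent statement $\overline{\Gamma}\cap K^* = \Gamma$. I fix $x\in K^*\setminus\Gamma$ and will construct an open subset of $\prod_{v\in\Omega}K_v^*$ containing $x$ and disjoint from $\Gamma$. Since $x$ has nonzero valuation at only finitely many places, I can enlarge $S$ to a finite set $S'\subset\Omega_{K/k}$ with $\Gamma\cup\{x\}\subset O_{S'}^*$; then $\Gamma':=\langle\Gamma,x\rangle$ is contained in $O_{S'}^*$, so Corollary \ref{csp} applies to $\Gamma'$ in place of $\Gamma$.

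The key step is to find a subgroup $H\leq\Gamma'$ with $\Gamma\subset H$, $x\notin H$, and $(\Gamma')^m\subset H$ for some $m\in\N$. Once this is done, Corollary \ref{csp} makes $H$ open in $\Gamma'$; the complement $\Gamma'\setminus H$ is then a union of cosets of the open subgroup $H$, hence open in $\Gamma'$, contains $x$, and is disjoint from $\Gamma$. Unwinding the subspace topology produces the required open neighborhood of $x$ in $\prod_{v\in\Omega}K_v^*$ that misses $\Gamma$, forcing $x\notin\overline{\Gamma}$.

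I construct $H$ by splitting on the order of $x$ modulo $\Gamma$. When $x^n\in\Gamma$ for some $n\geq 1$, the identity $(\gamma x^k)^n=\gamma^n(x^n)^k\in\Gamma$ shows $(\Gamma')^n\subset\Gamma$, so $H:=\Gamma$ does the job. When $x$ has infinite order modulo $\Gamma$, I take $H:=\Gamma\langle x^2\rangle$: the identity $(\gamma x^k)^2=\gamma^2 x^{2k}\in H$ gives $(\Gamma')^2\subset H$, and an equation $x=\gamma x^{2k}$ would force $x^{1-2k}\in\Gamma$ for the odd nonzero integer $1-2k$, contradicting the infinite-order hypothesis. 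Beyond this case split, the only care required is the routine subspace-topology unwinding, so the argument is essentially formal once Corollary \ref{csp} is in hand.
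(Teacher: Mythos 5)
Your proof is correct, and it takes a noticeably different route from the paper's. The paper starts from a point $P\in\overline{\Gamma}\cap K^*$, invokes Lemma \ref{LN} to replace $\Gamma$ by a finitely generated subgroup $\Gamma_0$ sitting inside a finitely generated closed subgroup $\Gamma_S\subset O_S^*$ (after enlarging $S$ so that $O_S^*$ itself is closed), and then deduces that $\Gamma_0$ is closed in $\Gamma_S$ from Corollary \ref{csp} via the fact that every finite-index subgroup of $\Gamma_S$ is open --- implicitly using that a subgroup of a finitely generated abelian group is an intersection of finite-index subgroups. You instead separate an arbitrary $x\in K^*\setminus\Gamma$ from $\Gamma$ directly: you adjoin $x$ to $\Gamma$, and exhibit an explicit subgroup $H$ of $\Gamma'=\langle\Gamma,x\rangle$ with $\Gamma\subset H$, $x\notin H$, and $(\Gamma')^m\subset H$ (with $m$ the order of $x$ modulo $\Gamma$ in the torsion case and $m=2$ via $H=\Gamma\langle x^2\rangle$ in the infinite-order case); Corollary \ref{csp} then makes $xH$ open in $\Gamma'$, and the coset $xH=\Gamma'\cap xU$ hands you the ambient open neighborhood of $x$ missing $\Gamma$. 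Both arguments rest on Corollary \ref{csp} as the essential input, but yours bypasses Lemma \ref{LN}, the enlargement of $S$ needed to make $O_S^*$ closed, and any appeal to finite generation or subgroup separability of finitely generated abelian groups; the price is the small case analysis on the order of $x$ modulo $\Gamma$, which you carry out correctly. The only point worth tightening is the ``unwinding'' step: it is cleanest to use the single coset $xH$ rather than all of $\Gamma'\setminus H$, writing $xH=\Gamma'\cap xU$ for $U$ open with $H=\Gamma'\cap U$ and observing $xU\cap\Gamma=xH\cap\Gamma=\emptyset$, but this is exactly the routine verification you flagged.
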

\begin{proof}
Let $P\in\overline{\Gamma}\cap K^*$.  Lemma \ref{LN} (and its proof) shows that $P\in\overline{\Gamma_0}$ for some finitely generated subgroup $\Gamma_0\subset\Gamma$ such that $\Gamma_0$ is contained in a finitely generated closed subgroup $\Gamma_S\subset O_S^*$. By enlarging $S$, we may assume $S\cup\Omega=\Omega_{K/k}$ and hence both $O_S^*$ and $\Gamma_S$ are closed in $K^*$. By Corollary \ref{csp}, every subgroup of $\Gamma_S$ with finite index is open; hence $\Gamma_0$ is closed in $\Gamma_S$, and thus in $K^*$. This shows $P\in\Gamma_0\subset\Gamma$ and finishes our proof.
\end{proof}

\begin{cor}\label{iso}
Suppose that $K/k$ satisfies the assumptions in Theorem \ref{OnSkolemConj_main}. Let $\Gamma\subset K^*$ be a subgroup contained in $O_S^*$ for some finite $S\subset\Omega_{K/k}$. Then for any subgroup $\Delta$ of $\Gamma$ containing  $\Gamma^m$ for some $m\in\mathbb{N}$,  the homomorphism
$$
\Gamma/\Delta\rarr\overline{\Gamma}/\overline{\Delta}
$$
is bijective.
\end{cor}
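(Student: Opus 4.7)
My plan is to prove surjectivity and injectivity separately, each reducing to one of the two Corollaries preceding the statement.

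For surjectivity, I would first invoke Corollary~\ref{csp} to conclude that $\Delta$ is open in $\Gamma$ (under the subspace topology inherited from $\prod_{v\in\Omega}K_v^*$, which is what underlies the closures $\overline{\Gamma}$ and $\overline{\Delta}$). Next I would lift this openness to the closures: choose an open neighborhood $W$ of the identity in $\prod_{v\in\Omega}K_v^*$ with $W\cap\Gamma\subset\Delta$, and verify that any $\alpha\in W\cap\overline{\Gamma}$ is a limit of $\gamma_n\in\Gamma$ which eventually lie in $W\cap\Gamma\subset\Delta$, so $\alpha\in\overline{\Delta}$. Hence $\overline{\Delta}$ is open in $\overline{\Gamma}$, and the quotient $\overline{\Gamma}/\overline{\Delta}$ is discrete. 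Since $\Gamma$ is dense in $\overline{\Gamma}$, its image in $\overline{\Gamma}/\overline{\Delta}$ is dense; but a dense subset of a discrete space is the whole space, so the map $\Gamma/\Delta\to\overline{\Gamma}/\overline{\Delta}$ is surjective.

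For injectivity, the kernel is $\Gamma\cap\overline{\Delta}$, and I must show this equals $\Delta$. Here I would apply Corollary~\ref{closed} to $\Delta$ itself: since $\Delta\subset\Gamma\subset O_S^*$, the hypotheses of that corollary are satisfied by $\Delta$, so $\Delta$ is closed in $K^*$ (with its subspace topology). Because this subspace topology on $K^*$ is induced from $\prod_{v\in\Omega}K_v^*$, closedness of $\Delta$ in $K^*$ translates to $\overline{\Delta}\cap K^*=\Delta$. Intersecting with $\Gamma\subset K^*$ then gives $\Gamma\cap\overline{\Delta}=\Delta$, which is exactly injectivity.

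The only genuinely non-trivial ingredients are the two Corollaries already established; the rest is bookkeeping with the topology. The step most susceptible to a subtle error is the openness-transfer in step~2, where one must be careful that the product topology on $\prod_{v\in\Omega}K_v^*$ really is the topology in which the closures $\overline{\Gamma}$ and $\overline{\Delta}$ are formed, and that the ``openness'' provided by Corollary~\ref{csp} is with respect to this same topology rather than merely some $v$-adic subtopology. Granting this compatibility (which is clear from the proofs of Lemma~\ref{ptop} and its companion), the argument is short and formal.
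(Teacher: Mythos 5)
Your argument is correct and takes essentially the same route as the paper's own proof: injectivity by applying Corollary~\ref{closed} to $\Delta$ itself to get $\overline{\Delta}\cap K^*=\Delta$, and surjectivity from the openness of $\Delta$ in $\Gamma$ supplied by Corollary~\ref{csp} (the paper simply delegates the openness-transfer and density step to Lemma~8 of \cite{Sun}, which you spell out instead). The only cosmetic caveat is that $\prod_{v\in\Omega}K_v^*$ need not be first countable when $\Omega$ is uncountable, so the sequence $\gamma_n$ should be replaced by the direct observation that every neighborhood of $\alpha\in W\cap\overline{\Gamma}$ contained in $W$ meets $\Gamma$, hence meets $W\cap\Gamma\subset\Delta$, giving $\alpha\in\overline{\Delta}$.
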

\begin{proof}
By Corollary \ref{closed}, we have $\Gamma\cap\overline{\Delta}=\Gamma\cap (\overline{\Delta}\cap K^*)=\Delta$, which is the desired injectivity. By Corollary \ref{csp}, $\Delta$ is open in $\Gamma$, hence the desired surjectivity follows. (c.f. Lemma 8, \cite{Sun})
\end{proof}

\begin{lemma}\label{temp}
Suppose that $k$ has positive characteristic $p$, and that $K$ is separably generated over  $k$. Then for any $n>0$, we have  $K\cap\overline{k}K^{p^n}=kK^{p^n}$.
\end{lemma}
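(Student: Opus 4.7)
The inclusion $kK^{p^n}\subset K\cap\overline{k}K^{p^n}$ is immediate, so the content lies in the reverse inclusion. My plan is a linear-disjointness argument inside $\overline{k}K$.

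First I would verify that $K$ and $\overline{k}$ are linearly disjoint over $k$. The hypothesis that $K$ is separably generated over $k$ makes $K/k$ a separable field extension, while the standing assumption on a function field $K/k$ that $k$ is relatively algebraically closed in $K$ makes $K/k$ primary. These two conditions together are exactly what it means for $K/k$ to be a regular extension, so $K\otimes_k\overline{k}$ is a domain; this is the standard tensor-product criterion for linear disjointness of $K$ and $\overline{k}$ over $k$ inside $\overline{k}K$.

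Next I would propagate this to the intermediate subfield $F:=kK^{p^n}\subset K$. The general fact I would invoke is: if $A$ and $B$ are linearly disjoint over $k$ in a common field and $k\subset F\subset A$, then $A$ and $FB$ are linearly disjoint over $F$. This is a routine tensor manipulation---linear disjointness of $F$ and $B$ over $k$ (inherited from $A$ and $B$) yields $F\otimes_k B\cong FB$, whence $A\otimes_F FB\cong A\otimes_k B$, and the latter injects into $AB$ by assumption. Taking $A=K$, $B=\overline{k}$, $F=kK^{p^n}$ then gives that $K$ and $\overline{k}K^{p^n}$ are linearly disjoint over $kK^{p^n}$ inside $\overline{k}K$.

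Finally, linear disjointness of two subfields over a common subfield forces their intersection to equal that subfield: if $x\in K\cap\overline{k}K^{p^n}$ were not in $kK^{p^n}$, then $\{1,x\}$ would be $kK^{p^n}$-linearly independent in $K$, yet $1\cdot x+(-x)\cdot 1=0$ would be a nontrivial $\overline{k}K^{p^n}$-linear dependence inside $\overline{k}K$, a contradiction. I expect the first step to be the delicate point of the argument, since it is there that the two hypotheses on $K/k$ have to be combined to produce regularity; once the linear disjointness of $K$ and $\overline{k}$ over $k$ is in hand, the remaining steps are formal.
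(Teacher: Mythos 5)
Your proof is correct, but it takes a genuinely different route from the paper's. The paper works with an explicit separating variable: writing $K=k(t,y)$ with $y$ separable over $k(t)$, it first observes that $K\cap k^{\text{sep}}K^{p^n}=kK^{p^n}$ (being both separable and purely inseparable over $kK^{p^n}$), and then reduces the remaining inclusion $K\cap\overline{k}K^{p^n}\subset k^{\text{sep}}K^{p^n}$ to the statement that the minimal polynomial of $y^{p^n}$ over $k^{\text{sep}}(t)$ stays irreducible over $\overline{k}(t)$. You instead invoke the abstract machinery of regular extensions: separability of $K/k$ (MacLane's criterion from separable generation) plus primarity ($k$ relatively algebraically closed in $K$) give linear disjointness of $K$ and $\overline{k}$ over $k$, which you transport up to the base $kK^{p^n}$ by the standard base-change lemma for linear disjointness, and then the intersection statement is the usual formal consequence. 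All three of your steps are sound; the base-change step is unproblematic here because $\overline{k}/k$ is algebraic, so $F\otimes_k\overline{k}$ is a domain integral over $F$ and hence already the compositum field. The trade-off is that your argument is cleaner and avoids any computation with generators, but it genuinely uses the standing hypothesis (from the paper's definition of a function field) that $k$ is relatively algebraically closed in $K$, whereas the paper's proof uses only the separable generation stated in the lemma; within the context of this paper both hypotheses are available, so either proof suffices.
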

\begin{proof}
Denote by $k^{\text{sep}}$ the separable closure of $k$ in $\overline{k}$. As $K\cap k^{\text{sep}}K^{p^n}$ is both separable and purely inseparable over $kK^{p^n}$, they are equal to each other; hence it remains to show that $K\cap\overline{k}K^{p^n}\subset k^{\text{sep}}K^{p^n}$. Let $K=k(t,y)$ with $y$ separable over $k(t)$. Then $K=k(t,y^{p^n})$ and thus $K\cap \overline{k}K^{p^n}=k(t,y^{p^n})\cap \overline{k}(t^{p^n},y^{p^n})\subset k^{\text{sep}}(t,y^{p^n})\cap \overline{k}(t^{p^n},y^{p^n})$. The irreducible polynomial of $y^{p^n}$ over $k^{\text{sep}}(t)$ is still irreducible over $\overline{k}(t)$,  and therefore $k^{\text{sep}}(t,y^{p^n})\cap \overline{k}(t^{p^n},y^{p^n}) =k^{\text{sep}}(t^{p^n},y^{p^n})$ since $k^{\text{sep}}(t)\cap \overline{k}(t^{p^n})=k^{\text{sep}}(t^{p^n})$. This completes the proof.
\end{proof}

\begin{lemma}\label{cts}
Suppose that $k$ has positive characteristic $p$, and that $K$ is separably generated over  $k$. Then for each $v\in\Omega_{K/k}$ and each $n>0$, any $kK^{p^n}$-linear map $\phi: K\rarr K$ is continuous with respect to the $v$-adic topology.
\end{lemma}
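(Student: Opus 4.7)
The plan is to reduce continuity of $\phi$ to continuity of the coordinate projections for an explicit basis of $K$ over $F := kK^{p^n}$, and then to pass to the completion $K_v$ and invoke the standard fact that a finite-dimensional vector space over a complete non-archimedean valued field carries a unique Hausdorff linear topology.

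By the same computation as in the proof of Lemma \ref{temp} (using that $K = k(t,y)$ with $y$ separable over $k(t)$, so $K = k(t,y^{p^n})$), we have $K = F(t)$ with $t^{p^s} \in F$ for some $s \leq n$. Hence $K/F$ is purely inseparable of degree $p^s$ with $F$-basis $\{1, t, \ldots, t^{p^s - 1}\}$. Writing $x = \sum_i c_i(x)\, t^i$ with $c_i(x) \in F$, any $F$-linear $\phi$ satisfies $\phi(x) = \sum_i c_i(x)\, \phi(t^i)$; since addition in $K$ and multiplication by each fixed $\phi(t^i)\in K$ are $v$-continuous, it suffices to prove that each coordinate projection $c_i : K \to F$ is $v$-continuous.

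To handle these coordinate projections, embed $K$ into $K_v$ and let $\hat F \subseteq K_v$ denote the closure of $F$, which is a complete non-archimedean valued field. The compositum $K \cdot \hat F \subseteq K_v$ has $\hat F$-dimension at most $p^s$, hence carries a unique Hausdorff $\hat F$-linear topology, is complete, and is closed in $K_v$; containing the dense subfield $K$, it must equal $K_v$. The crucial step is then to verify $K \cap \hat F = F$, equivalently that $\{1, t, \ldots, t^{p^s-1}\}$ remains $\hat F$-linearly independent in $K_v$. Granting this, the $F$-basis of $K$ is also an $\hat F$-basis of $K_v$, and the continuous coordinate projections $K_v \to \hat F$ (from the unique Hausdorff linear topology on a finite-dimensional space over complete $\hat F$) restrict on $K \subset K_v$ to precisely the $c_i$, proving they are $v$-continuous and hence that $\phi$ is $v$-continuous.

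I expect the equality $K \cap \hat F = F$ to be the principal obstacle, and the natural tool is Lemma \ref{temp}, which says $K \cap \overline{k} K^{p^n} = F$. The idea is to convert $v$-adic approximability by $F = kK^{p^n}$ into algebraic membership in $\overline{k} K^{p^n}$: for $x \in K \cap \hat F$, approximating $x$ by a sequence in $F$ and using that the $p^n$-th-power map on $K$ is a $v$-adic homeomorphism onto $K^{p^n}$ should express the limit as an algebraic combination of $K^{p^n}$ with elements of $\overline k$, placing $x$ inside $\overline k K^{p^n}$ and hence, by Lemma \ref{temp}, back into $F$.
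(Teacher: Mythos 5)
Your reduction to the continuity of the coordinate projections $c_i : K \to kK^{p^n}$ is sound, and the soft argument via the unique Hausdorff linear topology on a finite-dimensional vector space over the complete field $\hat F$ would indeed finish the job \emph{if} you knew that $1, t, \ldots, t^{p^n-1}$ remain linearly independent over $\hat F = \overline{kK^{p^n}}$. But that is exactly where the proposal stops being a proof, and the sketch you offer does not close the gap. The homeomorphism property of the $p^n$-power map only yields that the closure of $K^{p^n}$ in $K_v$ is $K_v^{p^n}$; to get from ``$x$ is a $v$-adic limit of elements of $kK^{p^n}$'' to ``$x \in \overline{k}K^{p^n}$'' you would additionally need (i) that $K \cap K_v^{p^n} = K^{p^n}$, which follows from separability of the extension $K_v/K$ --- a genuinely separate input you never invoke --- and (ii) some way of handling the constant field: $\hat F$ is the closure of $kK^{p^n}$, not of $K^{p^n}$, and when $[k:k^p]$ is infinite an approximating sequence $x_m=\sum_i a_{m,i}\,y_{m,i}^{p^n}$ has a number of terms and constants $a_{m,i}\in k$ varying with $m$, so no termwise passage to the limit produces an element of $\overline{k}K^{p^n}$. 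The natural repair is to pass to the constant field extension $\overline{k}K$, where $\overline{k}K^{p^n}=(\overline{k}K)^{p^n}$, and use separability of its completion together with Lemma \ref{temp}; none of this is in your sketch, and the phrase ``should express the limit as an algebraic combination'' is precisely the assertion that needs proof.

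The irony is that the fact which closes your gap is the one-line valuation observation on which the paper's much shorter proof runs: since $kK^{p^n}\subset\overline{k}K^{p^n}=(\overline{k}K)^{p^n}$ and the constant field extension $\overline{k}K/K$ is unramified, every nonzero element of $kK^{p^n}$, hence of its closure $\hat F$, has $v$-valuation in $p^n\mathbb{Z}$. Taking the basis to be powers of a uniformizer $s$ at $v$ (rather than powers of your separating element $t$, whose valuations need not be distinct modulo $p^n$; e.g.\ $v(t)$ may be $0$), the nonzero terms $c_j s^j$ have pairwise distinct valuations, which gives simultaneously the $\hat F$-independence you need and, with no completion machinery at all, the explicit bound $v(c_j)\geq v(x)-(p^n-1)$, i.e.\ the continuity of the coordinate projections. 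That is the paper's entire proof: $v(\phi(x))\geq v(x)+\min_{j}\bigl(v(\phi(s^j))-j\bigr)$. I would either adopt that computation outright or at minimum replace your sketch of the key step by it.
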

\begin{proof}
Any $v\in\Omega_{K/k}$ gives a discrete valuation $v:K^*\rarr\mathbb Z$ such that $v(s)=1$ for some $s\in K^*$ and that $v((kK^{p^n})^*)\subset p^n\mathbb Z$. It follows that $\{s^j\}_{0\leq j\leq p^n-1}$ is $K^{p^n}$-linearly independent. Since $K$ is separably generated over  $k$, we have $[K:kK^{p^n}]=p^n$ and conclude that
$\{s^j\}_{0\leq j\leq p^n-1}$ is a $K^{p^n}$-linear basis for $K$. To prove this lemma, it is enough to show the continuity of $\phi$ at $0$. Let $x = \sum_{j=0}^{p^n-1} c_j s^j\neq 0$ with all $c_j\in kK^{p^n}$. Then $v(x) = \min_{c_j\neq 0} \left(v(c_j)+j\right)$ and $\phi(x) = \sum_{j=0}^{p^n-1} c_j \phi(s^j)$. Thus, we have $v(\phi(x)) \geq \min_{c_j\neq 0} \left( v(c_j)+v(\phi(s^j))\right)\geq v(x)+ \min_{0\leq j\leq p^n-1} \left(v(\phi(s^j)-j)\right)$. This finishes the proof since $\min_{0\leq j\leq p^n-1} \left(v(\phi(s^j)-j)\right)$ is independent of $x$.
\end{proof}

For each $v\in\Omega_{K/k}$ and each natural number $m$, denote by $(kK^{p^m})^*_v$ the topological closure of $(kK^{p^m})^*$ in $K_v^*$.

\begin{prop}\label{OnSkolemConj_key}
Suppose that $k$ has positive characteristic $p$, and that $K$ is separably generated over  $k$. Let $\mathbf{b}\in (K^*)^M$, and let $m$ be a natural number.
\begin{enumerate}
\renewcommand{\labelenumi}{\theenumi)}
\renewcommand{\theenumi}{\alph{enumi}}
\item Suppose that the components of $\mathbf{b}$ are linearly independent over $kK^{p^m}$. Then we have $$\W_{\mathbf{b}}\left((kK^{p^m})^*_v\right)=\emptyset\qquad\text{for all }v\in\Omega_{K/k}.$$\label{keyW0}
\item Suppose that for some $j$ the components of $\psi_j(\mathbf{b})$  are linearly independent over $kK^{p^m}$. Then for some $P\in\W'_{\mathbf{b}}(K)$ we have
    $$
    \prod_{v\in\Omega_{K/k}}\W'_{\mathbf{b}}\left((kK^{p^m})^*_v\right)\subset \{P\}.
    $$
    If, moreover,  the components of either $\mathbf{b}$ or some $\psi_l(\mathbf{b})$  are linearly dependent over $kK^{p^m}$, then we have $$\W'_{\mathbf{b}}\left((kK^{p^m})^*_v\right)=\emptyset\qquad\text{for all }v\in\Omega_{K/k}.$$ \label{keyW1}
\end{enumerate}
\end{prop}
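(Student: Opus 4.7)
The plan is to use Lemma \ref{cts} to promote each $kK^{p^m}$-linear-independence hypothesis into a family of $v$-adically continuous projections on $K_v$ that either preclude a solution or pin it down uniquely. As is noted in the proof of Lemma \ref{cts}, separable generation yields $[K:kK^{p^m}]=p^m$, so any $kK^{p^m}$-linearly independent subset of $K$ extends to a $kK^{p^m}$-basis, and the associated dual coordinate functionals are $kK^{p^m}$-linear maps $K\rarr K$, hence $v$-adically continuous by Lemma \ref{cts}. Each such functional extends uniquely by uniform continuity to a continuous map $\tilde\pi:K_v\rarr K_v$ whose image lies in the closure of $kK^{p^m}$ in $K_v$, and by continuity together with density that extension is linear over that closure, so scalars in $(kK^{p^m})^*_v$ pull through. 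For part (\ref{keyW0}), I extend $\{b_1,\dots,b_M\}$ to a $kK^{p^m}$-basis with dual functionals $\pi_1,\dots,\pi_M$; then any purported solution $\sum_i b_i x_i=0$ with $x_i\in(kK^{p^m})^*_v$ would satisfy $x_j=\tilde\pi_j(\sum_i b_i x_i)=0$ for each $j\leq M$, contradicting $x_j\in K_v^*$.

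For part (\ref{keyW1}), I take $j=1$ without loss of generality, extend $\{1,b_2,\dots,b_M\}$ to a $kK^{p^m}$-basis $\{e_1=1,e_2=b_2,\dots,e_M=b_M,e_{M+1},\dots,e_{p^m}\}$, and form the $\tilde\pi_i$ as above. For any local solution $\sum_i b_i x_i=1$ with $x_i\in(kK^{p^m})^*_v$, applying $\tilde\pi_1$ yields $x_1\pi_1(b_1)=1$ and applying $\tilde\pi_l$ for $l=2,\dots,M$ yields $x_l=-x_1\pi_l(b_1)$, so the $x_i$ are forced (independently of $v$) to be explicit rational expressions in the $\pi_i(b_1)\in kK^{p^m}\subset K$. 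I set $P\in\Affi^M(K)$ to be this tuple whenever $\pi_1(b_1)\neq 0$, and any point of the non-empty variety $W'_{\mathbf{b}}(K)$ otherwise. A direct calculation shows $P\in W'_{\mathbf{b}}(K)$ precisely when $b_1$ lies in the $kK^{p^m}$-span of $\{1,b_2,\dots,b_M\}$; when it does not, no $v$-adic solution can exist at all. In either case $\prod_v W'_{\mathbf{b}}((kK^{p^m})^*_v)\subset\{P\}$.

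For the ``moreover'' assertion I would check that the extra dependence hypothesis kills the candidate $P$. The independence of $\{1,b_2,\dots,b_M\}$ combined with a $kK^{p^m}$-linear dependence among $b_1,\dots,b_M$ forces $b_1\in\text{span}_{kK^{p^m}}\{b_2,\dots,b_M\}$, so $\pi_1(b_1)=0$ and no local solution exists; a dependence among the components of $\psi_l(\mathbf{b})$ for some $l\in\{2,\dots,M\}$ analogously forces the $b_l$-coefficient of $b_1$ in the chosen basis to vanish, so $\pi_l(b_1)=0$ and hence $P_l=0\notin(kK^{p^m})^*_v$. The only subtle step throughout is the continuous extension of the $\pi_i$ to $K_v$ and the $(kK^{p^m})^*_v$-linearity of that extension; the rest reduces to elementary linear algebra with the explicit projection formulas.
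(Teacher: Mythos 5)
Your argument is correct, but it takes a genuinely different route from the paper. The paper constructs an iterative derivation $\{D_K^{(i)}\}$ whose common kernel for $1\leq i<p^m$ is $kK^{p^m}$ (via Lemma \ref{temp} and a result of Voloch), extends it continuously to $K_v$ by Lemma \ref{cts}, applies it to the relation $\mathbf{b}\cdot\mathbf{c}=e$ to get a square system $\mathbf{T}_{\mathbf{b},I}\mathbf{c}=\mathbf{e}$, and then invokes Voloch's generalized Wronskian criterion to produce a set $I$ with $\det\mathbf{T}_{\mathbf{b},I}\neq 0$, after which Cramer's rule yields the contradiction in (a) and the unique candidate $P$ in (b). You instead exploit $[K:kK^{p^m}]=p^m$ directly: you complete the given independent set to a $kK^{p^m}$-basis and use the dual coordinate functionals, which are $kK^{p^m}$-linear and hence, by the same Lemma \ref{cts}, uniformly continuous and extendable to $K_v$ with scalars from $(kK^{p^m})^*_v$ pulling through. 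This replaces the external Wronskian theorem by elementary linear algebra, makes the candidate point $P$ completely explicit (and manifestly independent of $v$), and handles the ``moreover'' clause by showing a coordinate of $P$ must vanish --- structurally parallel to the paper's observation that a component of $\mathbf{T}_{\mathbf{b},I}^*\mathbf{e}$ vanishes. The only blemish is a small bookkeeping slip in defining $P$: when $\pi_1(b_1)\neq 0$ but $b_1$ does not lie in the $kK^{p^m}$-span of $\{1,b_2,\ldots,b_M\}$, your explicit tuple is not in $\W'_{\mathbf{b}}(K)$, so in that case (where, as you note, all local solution sets are empty anyway) $P$ should also be taken to be an arbitrary point of $\W'_{\mathbf{b}}(K)$; this is a one-line repair and does not affect the substance. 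Both proofs ultimately rest on Lemma \ref{cts}; yours is more self-contained, while the paper's derivation/Wronskian formalism is the standard machinery in this literature and avoids choosing a basis extension.
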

\begin{proof}
Choose $t\in\overline{k}K$ such that $\overline{k}(t)\subset \overline{k}K \subset \overline{k}((t))$.
By Remark 1 in \cite{VolochWronskians}, there exists an iterative derivation $\{D_{\overline{k}K}^{(i)}\}_{i\geq 0}$ on $\overline{k}K$  such that $\overline{k}K^{p^m}=\{x\in \overline{k}K:D_{\overline{k}K}^{(l)}(x)=0,\;\text{ if }\; 1\leq l< p^m\}$ for $i,j\geq 0$. Taking restriction gives an iterative derivation $\{D_K^{(i)}\}_{i\geq 0}$ on $K$ such that $\{x\in K :D_K^{(l)}(x)=0,\;\text{ if }\; 1\leq l< p^m\}=K\cap \overline{k}K^{p^m} =kK^{p^m}$ by Lemma \ref{temp}.  By Lemma \ref{cts}, for each $v\in\Omega_{K/k}$, we extend each $\{D_K^{(i)}\}_{i\geq 0}$ to an iterative derivation $\{D_{K_v}^{(i)}\}_{i\geq 0}$ on $K_v$ by continuity, and note that $D_{K_v}^{(i)}|_{\left((kK^{p^m})^*\right)_v}$ is the zero map for any $1\leq i<p^m$.

Fix some $v\in\Omega_{K/k}$ and some $\mathbf{c}=(c_1,\ldots, c_M)\in \W_{\mathbf{b}}\left((kK^{p^m})^*_v\right)\cup \W'_{\mathbf{b}}\left((kK^{p^m})^*_v\right)$.  Then   $\sum_{j=1}^N b_j c_j = e$, where $e\in\{0,1\}$. For any $0\leq i<p^m$, because $D_{K_v}^{(i)}(c_j)=0$, we have that
\begin{equation}\label{keyeq}
\sum_{j=1}^N D_K^{(i)}(b_j) c_j^{(l)}=D_K^{(i)}(e).
\end{equation}

Denote by $\mathbf{c}$ (resp. $\mathbf{e}$) the $M$-by-$1$ matrix with the $j$-th component $c_j$ (resp. $D_K^{(j)}(e)$).
For any set $I=\{i_1,\ldots,i_M\}$ of $M$ nonnegative integers such that $0=i_1<i_2<\cdots i_M<p^m$, let $\mathbf{T}_{\mathbf{b},I}$ be the $M$-by-$M$ matrix with the entry $D_K^{(i_l)}(b_j)$ being at the $l$-th row and $j$-th column. From (\ref{keyeq}), we have $\mathbf{T}_{\mathbf{b},I}\mathbf{c}=\mathbf{e}$, which implies
\begin{equation}\label{mateq}
(\det \mathbf{T}_{\mathbf{b},I}) \mathbf{c}=\mathbf{T}_{\mathbf{b},I}^*\mathbf{e},
\end{equation}
where $\mathbf{T}_{\mathbf{b},I}^*$ denotes the adjoint matrix of $\mathbf{T}_{\mathbf{b},I}$. If $e=0$ and the components of $\mathbf{b}$ are linearly independent over $kK^{p^m}$, then by Theorem 1 of \cite{VolochWronskians}, $\det \mathbf{T}_{\mathbf{b},I}\neq 0$ for some $I$, which contradicts ${\mathbf c}\neq 0$. This proves (\ref{keyW0}).

To prove (\ref{keyW1}), we consider the case where $e=1$. Note that the $j$-th component of $\mathbf{T}_{\mathbf{b},I}^*{\mathbf e}$ is exactly $\det\mathbf{T}_{\psi_j(\mathbf{b}),I}$. Under the assumption in the first part, Theorem 1 of \cite{VolochWronskians} implies that $\det \mathbf{T}_{\psi_j(\mathbf{b}),I}\neq 0$ for some $j$ and $I$. Hence there is at most one choice for ${\mathbf c}$ satisfying (\ref{mateq}), and this choice gives $P\in W'_{\mathbf{b}}(K)$.  If the additional hypothesis also holds, then either $\det \mathbf{T}_{\mathbf{b},I}$ or some component of $\mathbf{T}_{\mathbf{b},I}^*{\mathbf e}$ is zero, and (\ref{mateq}) is impossible.
\end{proof}

{\it Proof of Theorem \ref{OnSkolemConj_main}.}
First, note that for each $m$ the kernel of the natural map
$$
\Gamma\rarr\left(O_S^*/k^*\right)/\left(O_S^*/k^*\right)^m
$$
is contained in $(kK^{p^m})^*$. This proves (\ref{fin_con}) since $O_S^*/k^*$ is  finitely generated (Corollary 1 of Proposition 14.1, \cite{NTFF}). We have
$$
\overline{\Gamma}=\bigcup_{\gamma\in R_m} \gamma\overline{\Gamma\cap (kK^{p^m})^*}\subset
\bigcup_{\gamma\in R_m} \prod_{v\in\Omega}\gamma (kK^{p^m})^*_v,
$$
where the first equality follows from Corollary \ref{iso}. This gives
$$
\W_{\mathbf{b}}(\overline{\Gamma})\subset
\prod_{v\in\Omega} \W_{\mathbf{b}}\left(\bigcup_{\gamma\in R_m} \gamma (kK^{p^m})^*_v\right)
=\bigcup_{\mathbf{r}\in\Affi^M(R_m)}\prod_{v\in\Omega}\mathbf{r} \W_{\mathbf{br}}\left((kK^{p^m})^*_v\right).
$$
Proposition \ref{OnSkolemConj_key}(\ref{keyW0}) shows that $\prod_{v\in\Omega}\W_{\mathbf{br}}\left((kK^{p^m})^*_v\right)=\emptyset$ for each $\mathbf{r}\in\Affi^M(R_m)$,  proving (\ref{W0}).

Similarly, we have
$$
\W'_{\mathbf{b}}(\overline{\Gamma})\subset
\bigcup_{\mathbf{r}\in\Affi^M(R_m)}\prod_{v\in\Omega}\mathbf{r} \W'_{\mathbf{br}}\left((kK^{p^m})^*_v\right).
$$
Let $U\subset \Affi^M(R_m)$ be the subset consisting of those $\mathbf{r}$ such that the components of $\mathbf{br}$ and those of each $\psi_j(\mathbf{br})$ are linearly independent over $kK^{p^m}$.   By Proposition \ref{OnSkolemConj_key}(\ref{keyW1}),  for each $\mathbf{u}\in U$ there exists some $P_{\mathbf{u}}\in\W'_{\mathbf{bu}}(K)$ such that $\prod_{v\in\Omega}\W'_{\mathbf{br}}\left((kK^{p^m})^*_v\right)\subset\{P_{\mathbf{u}}\}$, while for each $\mathbf{r}\in R^M\setminus \Affi^M(R_m)$, we have $\prod_{v\in\Omega}\W'_{\mathbf{br}}\left((kK^{p^m})^*_v\right)=\emptyset$. Then,
$\W'_{\mathbf{b}}(\overline{\Gamma})
\subset\{\mathbf{u}P_{\mathbf{u}}: \mathbf{u}\in U\}\subset \W'_{\mathbf{b}}(K)$. It follows that $\W'_{\mathbf{b}}(\overline{\Gamma})
\subset\W'_{\mathbf{b}}(\overline{\Gamma})\cap \W'_{\mathbf{b}}(K)=\W'_{\mathbf{b}}(\overline{\Gamma}\cap K^*)= \W'_{\mathbf{b}}(\Gamma)$, where the last equality is concluded by Corollary \ref{closed}. This finishes our proof.\qed
\bibliographystyle{alpha}
\bibliography{mybib}

\begin{thebibliography}{Ros02}

\bibitem[FJ08]{FieldArith}
Michael~D. Fried and Moshe Jarden.
\newblock {\em Field arithmetic}, volume~11 of {\em Ergebnisse der Mathematik
  und ihrer Grenzgebiete. 3. Folge. A Series of Modern Surveys in Mathematics
  [Results in Mathematics and Related Areas. 3rd Series. A Series of Modern
  Surveys in Mathematics]}.
\newblock Springer-Verlag, Berlin, third edition, 2008.
\newblock Revised by M. Jarden.

\bibitem[GV87]{VolochWronskians}
Arnaldo Garc{\'{\i}}a and J.~F. Voloch.
\newblock Wronskians and linear independence in fields of prime characteristic.
\newblock {\em Manuscripta Math.}, 59(4):457--469, 1987.

\bibitem[HV10]{HV}
David Harari and Jos{\'e}~Felipe Voloch.
\newblock The {B}rauer-{M}anin obstruction for integral points on curves.
\newblock {\em Math. Proc. Cambridge Philos. Soc.}, 149(3):413--421, 2010.

\bibitem[Ros02]{NTFF}
Michael Rosen.
\newblock {\em Number theory in function fields}, volume 210 of {\em Graduate
  Texts in Mathematics}.
\newblock Springer-Verlag, New York, 2002.

\bibitem[Sko37]{Sko37}
Thoralf Skolem.
\newblock Anwendung exponentieller {K}ongruenzen zum {B}eweis der
  {U}nl\"osbarkeit gewisser diophantischer {G}leichungen.
\newblock {\em Avhdl. Norske Vid. Akad. Oslo I}, 12:1--16, 1937.

\bibitem[Sun]{Sun}
Chia-Liang Sun.
\newblock Product of local points of subvarieties of almost isotrivial
  semi-abelian varieties over a global function field.
\newblock preprint arXiv.

\bibitem[Vol98]{axby}
Jos{\'e}~Felipe Voloch.
\newblock The equation {$ax+by=1$} in characteristic {$p$}.
\newblock {\em J. Number Theory}, 73(2):195--200, 1998.

\end{thebibliography}
\end{document}